\newcommand{\subalign}[1]{%
  \vcenter{%
    \Let@ \restore@math@cr \default@tag
    \baselineskip\fontdimen10 \scriptfont\tw@
    \advance\baselineskip\fontdimen12 \scriptfont\tw@
    \lineskip\thr@@\fontdimen8 \scriptfont\thr@@
    \lineskiplimit\lineskip
    \ialign{\hfil$\m@th\scriptstyle##$&$\m@th\scriptstyle{}##$\crcr
      #1\crcr
    }%
  }
}
\theoremstyle{plain}
\newtheorem{thm}{Theorem}[section]
\newtheorem{prop}[thm]{Proposition}
\newtheorem{lemma}[thm]{Lemma}
\newtheorem{cor}[thm]{Corollary}
\theoremstyle{definition}
\newtheorem{rmk}[thm]{Remark}
\def\th@plain{%
  \thm@notefont{}
  \itshape 
}
\def\th@definition{%
  \thm@notefont{}
  \normalfont 
}
\author{Francesca Bianchi}
\title{Consequences of the functional equation of the $p$-adic $L$-function of an elliptic curve}
\date{\today}
\begin{document}
\maketitle

\begin{abstract}
We prove that the first two coefficients in the series expansion around $s=1$ of the $p$-adic $L$-function of an elliptic curve over $\mathbb{Q}$ are related by a formula involving the conductor of the curve. This is analogous to a recent result of Wuthrich for the classical $L$-function \cite{subleading}, which makes use of the functional equation. We present a few other consequences for the $p$-adic $L$-function and a generalisation to the base-change to an abelian number field.
\end{abstract}

\section{Introduction}
In \cite{subleading}, Wuthrich proves a relation between the leading and sub-leading coefficients of the Taylor series expansion at $s=1$ of the $L$-function of an elliptic curve over a number field, under the assumptions that the series admits an analytic continuation to $s=1$ and satisfies a functional equation. Wuthrich remarks that a similar result can potentially be obtained for any $L$-function which satisfies a functional equation.

In this paper, we consider the $p$-adic $L$-function of an elliptic curve over $\mathbb{Q}$ and its twist by a quadratic character and explore the analogue of Wuthrich's result. The prime $p$ will be assumed to be odd and of semistable reduction for the curve. We start in Section \ref{notation} by reviewing the definition of the $p$-adic $L$-function and of its twists by Dirichlet characters $\psi$. Indeed, we define two $p$-adic $L$-functions, namely $L_p(E,\alpha,\psi,s)$, which is a function of $s\in\mathbb{Z}_p$, and $\mathcal{L}_p(E,\alpha,\psi,T)$, a power series in $T$. We state in Proposition \ref{functionalsupergenerals} and Corollary \ref{functionalsupergeneralT} the functional equation for $L_p(E,\alpha,\psi,s)$ and the corresponding result for $\mathcal{L}_p(E,\alpha,\psi,T)$. Proofs of the interpolation property and of the functional equation satisfied, more generally, by the twists of the $p$-adic $L$-function of a weight $k$ cuspidal eigenform of Nebentypus $\epsilon$ are provided in \cite[Chapter 1, $\S\S 13,17$]{mtt}.

Subsequently, in Section \ref{relations}, we prove the analogue of \cite[Theorem 1]{subleading} for $L_p(E,\alpha,\psi,s)$ when $\psi$ is a real character. In particular, we prove in Theorem \ref{mains} that the sub-leading coefficient in the Taylor expansion of $L_p(E,\alpha,\psi,s)$ at $s=1$ is equal to the leading coefficient multiplied by a constant dependent exclusively on the conductor of $E$, on the conductor of $\psi$ and on the prime $p$. The assumption that $\psi$ is a real character is necessary, since the functional equation relates the values of $L_p(E,\alpha,\psi,s)$ to those of $L_p(E,\alpha,\bar{\psi},2-s)$.

Although the latter result is perhaps of some interest on its own, when studying $p$-adic $L$-functions one is often concerned about the corresponding power series $\mathcal{L}_p(E,\alpha,\psi,T)$, because of its central importance in Iwasawa theory of elliptic curves. In Theorem \ref{main} we thus derive a similar result for the first two leading coefficients of $\mathcal{L}_p(E,\alpha,\psi,T)$. In Section \ref{relationT}, we also prove another consequence of the relation between $\mathcal{L}_p(E,\alpha,\psi,T)$ and $\mathcal{L}_p(E,\alpha,\bar{\psi}, (1+T)^{-1}-1)$, namely that the $\mu$-invariant of $\mathcal{L}_p(E,\alpha,\psi,T)$ is the same as that of $\mathcal{L}_p(E,\alpha,\bar{\psi},T)$. This is Theorem \ref{mupsipsibar}.

We end the article with a generalisation of some of the results to the $p$-adic $L$-function of the base-change of an elliptic curve to an abelian number field.\\

{\bf Acknowledgements.} The author is very grateful to her adviser Jennifer Balakrishnan for suggesting the question and for proof-reading carefully. She would also like to thank Chris Wuthrich for reading an earlier version of this article and providing extremely useful comments. Finally, she thanks Thanasis Bouganis for the very helpful conversations.

\section{Notation and statement of the functional equation}
\label{notation}
In this section, we recall the definition of the $p$-adic $L$-function and the statement of the functional equation it satisfies. We follow the notation of \cite{steinwuth} for the modular symbols and measures as well as the construction of the $p$-adic $L$-function, although we also allow non-trivial twists. We fix once and for all embeddings of $\bar{\mathbb{Q}}$ into $\mathbb{C}$ and $\bar{\mathbb{Q}}_p$.

Let $E/\mathbb{Q}$ be an elliptic curve of conductor $N$ and assume that $p$ is an odd prime at which $E$ has semistable reduction. Let $f(z)$ be the newform of weight 2 corresponding to $E$ by modularity and denote its $q$-expansion $\sum_{n=1}^{\infty} a_n q^n$.

We now define a quantity $\alpha$, dependent on the reduction type of $E$ at $p$. In particular, if $E$ has good ordinary reduction, we let $\alpha$ be the unique $p$-adic unit in $\mathbb{Z}_p$ which is a root of the characteristic polynomial of Frobenius $P(X)=X^2-a_pX+p$. When $E$ has good supersingular reduction, we denote by $\alpha$ an arbitrarily fixed root of $P(X)$. Finally, if $E$ has multiplicative reduction, we set $\alpha=a_p$, so that, in particular, $\alpha=1$ (resp. $\alpha=-1$) if the reduction type is split (resp. non-split). We call $\alpha$ an \emph{allowable $p$-root (for $E$)} and we emphasize that $\alpha$ is uniquely determined, unless $E$ has good supersingular reduction at $p$.

For $r\in \mathbb{Q}$, we let
\begin{equation*}
\lambda^{\pm}(r)=\mp\pi i \left(\int_{r}^{i\infty}f(z)dz\pm\int_{-r}^{i\infty}f(z)dz\right).
\end{equation*}
Normalising $\lambda^{+}(r)$ (resp. $\lambda^{-}(r)$) by the real period $\Omega_E^+$ (resp. imaginary period $\Omega_E^-$) of $E$ gives the rational numbers $[r]^{\pm}=\frac{\lambda^{\pm}(r)}{\Omega^{\pm}_E}$.

Any unit $x$ in $\mathbb{Z}_p$ has a unique decomposition as a product of the form $\omega(x)\braket{x}$ where $\omega(x)$ is a $(p-1)^{\text{st}}$ root of unity and $\braket{x}\in 1+p\mathbb{Z}_p$ (note the running assumption that $p$ is odd).

As in \cite[I \S\S 11, 13]{mtt}, for a positive integer $M$, we let
$
\mathbb{Z}^*_{p,M}=\varprojlim_{n}(\mathbb{Z}/p^n M\mathbb{Z})^*
$
and define a \emph{$p$-adic character} as a continuous homomorphism $\mathbb{Z}_{p,M}^*\to \mathbb{C}_p^*$. In particular, since we have fixed an embedding of $\bar{\mathbb{Q}}$ into $\bar{\mathbb{Q}}_p$, we may regard the notion of a finite order $p$-adic character and a Dirichlet character as equivalent and hence, even in the $p$-adic setting, we may use the notation $\bar{\psi}$ for the inverse of the finite order character $\psi$ and say $\psi$ is \emph{real-valued} if it is real-valued as a Dirichlet character, i.e. $\psi=\bar{\psi}$.

We then define the measures
\begin{equation*}
\mu^{\pm}_{\alpha}(a+p^k M \mathbb{Z}_p)=\frac{1}{\alpha^k}\left[\frac{a}{M p^k}\right]^{\pm} -\frac{(1-\delta)}{\alpha^{k+1}}\left[\frac{a}{M p^{k-1}}\right]^{\pm},
\end{equation*}
where $\delta=0$ if $E$ has good reduction and $1$ otherwise.

Let $\psi$ be a finite order $p$-adic character (possibly the identity) of conductor $p^kM$, with $(M,p)=1$, and write $\braket{x}^{s-1}=\exp{((s-1)\log{\braket{x}})}=\sum_{n=0}^{\infty}\frac{(s-1)^n}{n!}(\log{\braket{x}})^n$, where $\exp$ and $\log$ are the $p$-adic exponential and logarithm maps. Then we define 
\begin{equation*}
L_p(E,\alpha,\psi,s)=\int_{\mathbb{Z}_{p,M}^*}\psi(x)\braket{x}^{s-1}d \mu_{\alpha}(x),
\end{equation*}
where $\mu_{\alpha}=\mu_{\alpha}^+$ if the sign of $\psi$ is $+1$ and $\mu_{\alpha}=\mu_{\alpha}^-$ otherwise. The $p$-adic $L$-function $L_p(E,\alpha,\psi,s)$ is a locally analytic function of $s\in\mathbb{Z}_p$ (see the first proposition in \cite[I \S 13]{mtt}).

We now perform a change of variable in the definition of $L_p(E,\alpha,\psi,s)$. Consider the extension $\mathbb{Q}(\mu_{p^{\infty}})$ of $\mathbb{Q}$, obtained by adjoining to $\mathbb{Q}$ all roots of unity of $p$-power order. Furthermore, let $\Gamma$ denote the Galois group of the cyclotomic $\mathbb{Z}_p$-extension $\mathbb{Q}^{\infty}$ of $\mathbb{Q}$ and let $\mathbb{Q}^n$ be the $n^{\text{th}}$ layer in the cyclotomic tower.\\
Then there is a canonical homomorphism
\begin{equation*}
\kappa: \text{Gal}(\mathbb{Q}(\mu_{p^{\infty}})/\mathbb{Q})\to \mathbb{Z}_p^*
\end{equation*}
defined by the relation $\sigma(\zeta)=\zeta^{\kappa(\sigma)}$ for all $\sigma\in \text{Gal}(\mathbb{Q}(\mu_{p^{\infty}})/\mathbb{Q})$ and $\zeta\in\mu_{p^{\infty}}$. The homomorphism $\kappa$ is called the \emph{cyclotomic character} and it induces an isomorphism $\Gamma\to 1+p\mathbb{Z}_p$. We may choose a topological generator $\gamma\in \Gamma$ and define $T=\kappa(\gamma)^{s-1}-1$. The result is a power series in $T$, with coefficients in $\mathbb{Q}_p[\psi](\alpha)$,
\begin{equation*}
\mathcal{L}_p(E,\alpha,\psi, T)=\int_{\mathbb{Z}_{p,M}^*}\psi(x)(1+T)^{\frac{\log{\braket{x}}}{\log{\kappa(\gamma)}}} d \mu_{\alpha}(x).
\end{equation*}
When the character $\psi$ is trivial, we write simply $L_p(E,\alpha, s)$ and $\mathcal{L}_p(E,\alpha, T)$.

Denote by $Q$ the largest divisor of $N$ coprime to $pM$. Then there exists $c_Q=\pm 1$ such that $w_Q(f)= c_Q f$, where 
\begin{equation*}
w_Q(f)=f\vert_{\left( \begin{smallmatrix} Qa & b \\
 Nc & Qd \end{smallmatrix} \right)}(z)
\end{equation*}
for some integers $a,b,c,d$ satisfying $Qad-(N/Q)bc=1$. Note that, when $Q=N$, we have $w_E=-c_Q$ where $w_E$ is the sign in the functional equation of the complex $L$-function $L(E,s)$.
\begin{prop}[Functional equation]
\label{functionalsupergenerals}
Let $E$ be an elliptic curve of conductor $N$ with semistable reduction at the odd prime $p$ and let $\psi$ be a Dirichlet character of conductor $p^kM$, for some non-negative integers $k$ and $M$, with $(M,p)=1$. For each allowable $p$-root $\alpha$, the $p$-adic $L$-function $L_p(E,\alpha,\psi,s)$ satisfies the functional equation
\begin{equation*}
L_p(E,\alpha,\psi,2-s)=-c_Q\cdot\bar{\psi}(-Q)\braket{Q}^{s-1}L_p(E,\alpha,\bar{\psi},s),
\end{equation*}
where $Q$ is the largest divisor of $N$ coprime to $pM$.
\end{prop}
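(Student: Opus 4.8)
The plan is to deduce the functional equation from a single transformation law for the measures $\mu_{\alpha}^{\pm}$ under a suitable involution of $\mathbb{Z}_{p,M}^{*}$, following the computations of \cite[Chapter 1, $\S\S 13,17$]{mtt}. Write $N=QR$, so that $(Q,R)=1$ and every prime of $R$ divides $pM$, and let $\iota\colon\mathbb{Z}_{p,M}^{*}\to\mathbb{Z}_{p,M}^{*}$ be the map $\iota(x)=-(Qx)^{-1}$; this is a well-defined involution since $Q$ is a unit in $\mathbb{Z}_{p,M}^{*}$. The Proposition will follow once we establish the measure identity
\begin{equation*}
\mu_{\alpha}^{\pm}\big(\iota(U)\big)=-c_{Q}\cdot\mu_{\alpha}^{\pm}(U)\qquad\text{for every basic open }U\subseteq\mathbb{Z}_{p,M}^{*}.
\end{equation*}
The only substantive input is the Atkin--Lehner relation $w_{Q}(f)=c_{Q}f$.

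To prove the measure identity, I would first translate $w_{Q}(f)=c_{Q}f$ into a statement about the modular symbols $[\,\cdot\,]^{\pm}$. Read on period integrals, the relation says $\int_{w_{Q}r_{1}}^{w_{Q}r_{2}}f=c_{Q}\int_{r_{1}}^{r_{2}}f$; applying this with $r=a/(Mp^{j})$ for $(a,pM)=1$ and using the $\Gamma_{0}(N)$-invariance and additivity of $\{r_{1},r_{2}\}\mapsto\int_{r_{1}}^{r_{2}}f$ to move the image cusp back to a cusp with denominator $Mp^{j}$, one obtains (this is the computation carried out in \cite{mtt}) a relation expressing $\lambda^{\pm}(b/(Mp^{j}))$ in terms of $\lambda^{\pm}(a/(Mp^{j}))$, where $b$ is any integer with $Qab\equiv-1\pmod{Mp^{j}}$, up to ``cusp corrections'' supported on cusps equivalent to $\infty$. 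Combining the $+$ and $-$ parts by means of $\lambda^{\pm}(-r)=\pm\lambda^{\pm}(r)$ and dividing by the periods $\Omega_{E}^{\pm}$, this becomes
\begin{equation*}
\left[\frac{b}{Mp^{j}}\right]^{\pm}=-c_{Q}\left[\frac{a}{Mp^{j}}\right]^{\pm}
\end{equation*}
modulo the correction terms, which vanish after one integrates against a non-trivial character (the trivial-character case being treated separately, cf.\ \cite{mtt}). Feeding this into the defining recursion $\mu_{\alpha}^{\pm}(a+p^{j}M\mathbb{Z}_{p})=\alpha^{-j}[a/(Mp^{j})]^{\pm}-(1-\delta)\alpha^{-(j+1)}[a/(Mp^{j-1})]^{\pm}$, and noting that $\iota$ carries $a+p^{j}M\mathbb{Z}_{p}$ onto $b+p^{j}M\mathbb{Z}_{p}$ with $Qab\equiv-1\pmod{Mp^{j}}$, yields the measure identity when $E$ has good reduction ($\delta=0$); in the multiplicative case ($\delta=1$) one must additionally check that the $\alpha^{-(j+1)}$-term is compatible with $\iota$, which uses $\alpha=\pm1$.

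Granting the measure identity, the functional equation is a change of variables. Since $\psi(-1)=\bar\psi(-1)$, both $L_{p}(E,\alpha,\psi,\cdot)$ and $L_{p}(E,\alpha,\bar\psi,\cdot)$ are defined by integration against the same measure $\mu_{\alpha}^{\epsilon}$ with $\epsilon=\psi(-1)$. Starting from $L_{p}(E,\alpha,\psi,2-s)=\int_{\mathbb{Z}_{p,M}^{*}}\psi(x)\braket{x}^{1-s}\,d\mu_{\alpha}^{\epsilon}(x)$ and substituting $x=\iota(y)$, one has $\psi(\iota(y))=\psi(-1)\bar\psi(Q)\bar\psi(y)=\bar\psi(-Q)\bar\psi(y)$ and, because $p$ is odd so that $\braket{-1}=1$, $\braket{\iota(y)}^{1-s}=\big(\braket{Q}^{-1}\braket{y}^{-1}\big)^{1-s}=\braket{Q}^{s-1}\braket{y}^{s-1}$. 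The measure identity contributes the factor $-c_{Q}$, and we obtain
\begin{equation*}
L_{p}(E,\alpha,\psi,2-s)=-c_{Q}\,\bar\psi(-Q)\braket{Q}^{s-1}\int_{\mathbb{Z}_{p,M}^{*}}\bar\psi(y)\braket{y}^{s-1}\,d\mu_{\alpha}^{\epsilon}(y)=-c_{Q}\,\bar\psi(-Q)\braket{Q}^{s-1}L_{p}(E,\alpha,\bar\psi,s).
\end{equation*}

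The main obstacle is the first half of the middle paragraph: determining exactly how $w_{Q}$ moves the cusp $a/(Mp^{j})$ (with all the $\gcd$ bookkeeping), verifying that the corrections coming from the $\Gamma_{0}(N)$-reduction are harmless once twisted, and checking the multiplicative-reduction correction term, all while keeping the signs under control so that precisely $-c_{Q}$ (and not, say, $-c_{Q}$ multiplied by some unwanted power of $\braket{Q}$ or by $w_{E}$) appears. These are exactly the calculations of \cite[Chapter 1, $\S\S 13,17$]{mtt}, to which we appeal.
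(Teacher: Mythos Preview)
Your proposal is correct and takes the same route as the paper, which simply cites \cite[I \S17, Corollary 2]{mtt}; your sketch unpacks precisely that argument (the involution $\iota(x)=-(Qx)^{-1}$, the Atkin--Lehner transformation of modular symbols, and the change of variables in the integral). One small slip: when $\delta=1$ (multiplicative reduction) the $(1-\delta)\alpha^{-(j+1)}$-term in the measure \emph{vanishes}, so that case is the easier one; it is the good-reduction case $\delta=0$ that carries both terms and requires the modular-symbol identity at levels $j$ and $j-1$ simultaneously.
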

\begin{proof}
See \cite[I \S17, Corollary 2]{mtt}.
\end{proof}

\begin{cor}
\label{functionalsupergeneralT}
With the assumptions as in Proposition \ref{functionalsupergenerals}, we have
\begin{equation*}
\mathcal{L}_p(E,\alpha,\psi,T)=-c_Q\cdot\bar{\psi}(-Q)\braket{Q}^{\frac{\log(1+T)^{-1}}{\log\kappa(\gamma)}}\mathcal{L}_p(E,\alpha,\bar{\psi},(1+T)^{-1}-1).
\end{equation*}
\end{cor}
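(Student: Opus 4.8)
The plan is to translate the functional equation of Proposition \ref{functionalsupergenerals} for $L_p(E,\alpha,\psi,s)$ into the language of the power series $\mathcal{L}_p(E,\alpha,\psi,T)$ via the change of variable $T=\kappa(\gamma)^{s-1}-1$ that defines $\mathcal{L}_p$. First I would recall the dictionary between the two objects: by construction, $\mathcal{L}_p(E,\alpha,\psi,T)$ is the unique power series such that $\mathcal{L}_p(E,\alpha,\psi,\kappa(\gamma)^{s-1}-1)=L_p(E,\alpha,\psi,s)$ for all $s\in\mathbb{Z}_p$, since $(1+T)^{\log\braket{x}/\log\kappa(\gamma)}$ specializes to $\braket{x}^{s-1}$ when $1+T=\kappa(\gamma)^{s-1}$. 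So it suffices to verify that both sides of the claimed identity, viewed as functions of $s$ through $T=\kappa(\gamma)^{s-1}-1$, agree with the corresponding sides of the identity in Proposition \ref{functionalsupergenerals} after the appropriate substitution.

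Concretely, I would set $1+T=\kappa(\gamma)^{s-1}$, so that the left-hand side $\mathcal{L}_p(E,\alpha,\psi,T)$ becomes $L_p(E,\alpha,\psi,s)$. For the right-hand side, the key computation is to identify the argument $(1+T)^{-1}-1$: we have $(1+T)^{-1}=\kappa(\gamma)^{-(s-1)}=\kappa(\gamma)^{(2-s)-1}$, so $(1+T)^{-1}-1=\kappa(\gamma)^{(2-s)-1}-1$, whence $\mathcal{L}_p(E,\alpha,\bar\psi,(1+T)^{-1}-1)=L_p(E,\alpha,\bar\psi,2-s)$. It then remains to match the scalar factor: $\braket{Q}^{\log(1+T)^{-1}/\log\kappa(\gamma)}=\braket{Q}^{-(s-1)\log\kappa(\gamma)/\log\kappa(\gamma)}=\braket{Q}^{-(s-1)}=\braket{Q}^{(2-s)-1}$. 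Substituting everything, the claimed power-series identity becomes exactly $L_p(E,\alpha,\psi,s)=-c_Q\,\bar\psi(-Q)\braket{Q}^{(2-s)-1}L_p(E,\alpha,\bar\psi,2-s)$, which is precisely the functional equation of Proposition \ref{functionalsupergenerals} with $s$ replaced by $2-s$ (equivalently, read with the roles of $s$ and $2-s$ interchanged). Since two power series in $T$ that agree for all $s\in\mathbb{Z}_p$ — equivalently on the infinitely many values $1+T=\kappa(\gamma)^{s-1}$, which accumulate at $T=0$ — must be equal, the corollary follows.

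The only point requiring a little care, and the main (mild) obstacle, is the bookkeeping of the exponentials: one must check that $\braket{Q}^{\log(1+T)^{-1}/\log\kappa(\gamma)}$ is a well-defined power series in $T$ — which it is, since $\braket{Q}\in 1+p\mathbb{Z}_p$ so $\log\braket{Q}\in p\mathbb{Z}_p$, and $\log(1+T)/\log\kappa(\gamma)\in\mathbb{Z}_p[[T]]$ (as $\log\kappa(\gamma)$ is a $p$-adic unit times a power of $p$ matching $\log(1+T)$'s leading term), so the composite $\exp\!\big(\tfrac{\log\braket{Q}}{\log\kappa(\gamma)}\log(1+T)^{-1}\big)$ converges — and that the identification of power series from agreement at the points $s\in\mathbb{Z}_p$ is legitimate. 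I would phrase the argument so as to emphasize that the substitution is purely formal once one knows both sides lie in the relevant ring of power series, so that no genuine analytic input beyond Proposition \ref{functionalsupergenerals} is needed.
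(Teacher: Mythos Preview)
Your proposal is correct and follows exactly the route the paper has in mind: the corollary is stated without proof precisely because it is obtained from Proposition~\ref{functionalsupergenerals} by the substitution $1+T=\kappa(\gamma)^{s-1}$, which you carry out cleanly.

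One small technical remark on the point you flag as ``the only point requiring a little care'': your claim that $\log(1+T)/\log\kappa(\gamma)\in\mathbb{Z}_p[[T]]$ is not quite right, since $\log\kappa(\gamma)$ has $p$-adic valuation $1$ while $\log(1+T)=T-T^2/2+\cdots$ has denominators, so the quotient does not lie in $\mathbb{Z}_p[[T]]$. The cleaner justification is to observe that $\braket{Q}\in 1+p\mathbb{Z}_p$ and $\kappa(\gamma)$ topologically generates $1+p\mathbb{Z}_p$, so $a:=\log\braket{Q}/\log\kappa(\gamma)\in\mathbb{Z}_p$; then $\braket{Q}^{\log(1+T)^{-1}/\log\kappa(\gamma)}=(1+T)^{-a}=\sum_{n\ge 0}\binom{-a}{n}T^n$ is manifestly in $\mathbb{Z}_p[[T]]$ by the binomial theorem. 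This does not affect the validity of your argument.
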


\section{Relation between the leading and sub-leading coefficients of the $p$-adic $L$-function of an elliptic curve}
\label{relations}
We adopt the same notation as in Section \ref{notation}.
\begin{lemma}
\label{ordchar}
Let $\psi$ be a real-valued Dirichlet character. Then
\begin{equation*}
-c_Q\cdot \psi(-Q)=(-1)^{\text{ord}_{s=1}L_p(E,\alpha,\psi,s)}.
\end{equation*}
\end{lemma}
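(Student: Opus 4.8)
The idea is to evaluate the functional equation of Proposition~\ref{functionalsupergenerals} at the central point $s=1$ and compare orders of vanishing. Since $\psi$ is real-valued, $\bar\psi=\psi$, so the functional equation reads
\begin{equation*}
L_p(E,\alpha,\psi,2-s)=-c_Q\cdot\psi(-Q)\braket{Q}^{s-1}L_p(E,\alpha,\psi,s).
\end{equation*}
First I would observe that $\braket{Q}\in 1+p\mathbb{Z}_p$, so $\braket{Q}^{s-1}=\exp((s-1)\log\braket{Q})$ is a nowhere-vanishing analytic function of $s$ which takes the value $1$ at $s=1$; in particular it does not affect orders of vanishing at $s=1$. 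Writing $n=\mathrm{ord}_{s=1}L_p(E,\alpha,\psi,s)$, both sides of the displayed equation therefore vanish to order exactly $n$ at $s=1$ (the left side because $s\mapsto 2-s$ fixes $s=1$ and is an analytic isomorphism near it, the right side by the previous remark), so if $n=\infty$ the claimed identity is vacuous and we may assume $n<\infty$.

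Next I would extract the leading coefficients. Let $a\ne 0$ be the coefficient of $(s-1)^n$ in the Taylor expansion of $L_p(E,\alpha,\psi,s)$ at $s=1$. On the left-hand side, substituting $s\mapsto 2-s$ turns $(s-1)^n$ into $(1-s)^n=(-1)^n(s-1)^n$, so the coefficient of $(s-1)^n$ in $L_p(E,\alpha,\psi,2-s)$ is $(-1)^n a$. On the right-hand side, since $\braket{Q}^{s-1}=1+O(s-1)$, the coefficient of $(s-1)^n$ in $-c_Q\psi(-Q)\braket{Q}^{s-1}L_p(E,\alpha,\psi,s)$ is $-c_Q\psi(-Q)\cdot a$. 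Equating and cancelling $a$ gives $(-1)^n=-c_Q\psi(-Q)$, which is exactly the assertion of the lemma.

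The argument is essentially a bookkeeping comparison of Taylor coefficients, so there is no serious obstacle; the one point that deserves care is the case $n=\infty$, i.e.\ when $L_p(E,\alpha,\psi,s)$ vanishes identically near $s=1$. In that degenerate situation the exponent $(-1)^{\mathrm{ord}_{s=1}L_p}$ is not well-defined, and one should either exclude it by convention or note that then the right-hand side $-c_Q\psi(-Q)\in\{\pm1\}$ is simply not constrained; I expect the paper implicitly assumes $L_p(E,\alpha,\psi,s)\not\equiv 0$, which holds in the cases of interest (e.g.\ by the interpolation property, or because $\mathcal{L}_p(E,\alpha,\psi,T)$ is a nonzero power series). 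With that understood, the proof is complete.
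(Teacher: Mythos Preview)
Your proof is correct and follows essentially the same approach as the paper: both use that $\psi=\bar\psi$ to turn the functional equation into a self-relation for $L_p(E,\alpha,\psi,s)$, and then compare leading terms at $s=1$. The only cosmetic difference is that the paper first absorbs the factor $\braket{Q}^{s-1}$ by introducing the completed function $\Lambda_p(E,\alpha,\psi,s)=\braket{Q}^{s/2}L_p(E,\alpha,\psi,s)$, obtaining the clean relation $\Lambda_p(2-s)=-c_Q\psi(-Q)\Lambda_p(s)$ and then differentiating $m$ times, whereas you work directly with the raw functional equation and observe that $\braket{Q}^{s-1}=1+O(s-1)$ does not affect the leading Taylor coefficient.
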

\begin{proof}
This is a standard exercise given the functional equation. Indeed, since $\psi$ is assumed to be real-valued, Proposition \ref{functionalsupergenerals} becomes
\begin{equation}
\label{lambdapfunctional}
\Lambda_p(E,\alpha,\psi,2-s)=-c_Q\cdot\psi(-Q)\Lambda_p(E,\alpha,\psi,s),
\end{equation}
where we put $\Lambda_p(E,\alpha,\psi,s)=f(s)\cdot L_p(E,\alpha,\psi,s)$ with $f(s)=\braket{Q}^{s/2}$. Let $m=\text{ord}_{s=1}L_p(E,\alpha,\psi,s)$, so that $\frac{d^i}{ds^i}L_p(E,\alpha,\psi,s)\big\vert_{s=1}=0$ for all $i<m$ and $\frac{d^m}{ds^m}L_p(E,\alpha,\psi,s)\big\vert_{s=1}\neq 0$. Then differentiating both sides of (\ref{lambdapfunctional}) and evaluating at $s=1$ yields
\begin{equation*}
((-1)^{m}+c_Q\cdot \psi(-Q))\frac{d^m}{ds^m}\Lambda_p(E,\alpha,\psi,s)\big\vert_{s=1}=0.
\end{equation*}
By the definition of $m$, one has 
\begin{equation}
\frac{d^m}{ds^m}\Lambda_p(E,\alpha,\psi,s)\big\vert_{s=1}=f(1)\frac{d^m}{ds^m}L_p(E,\alpha,\psi,s)\big\vert_{s=1}\neq 0,
\end{equation}
which forces $-c_Q\cdot \psi(-Q)=(-1)^m$.
\end{proof}

\begin{rmk}
Assume that the conductor of $\psi$ is not divisible by any prime of additive reduction for $E$. A `refined' version of the $p$-adic Birch and Swinnerton-Dyer conjecture then predicts that
\begin{equation*}
\text{ord}_{s=1}L(E,\alpha,\psi,s)=m(E,\psi)+\delta,
\end{equation*}
where $\psi$ is now viewed as a one-dimensional representation of $\text{Gal}(K/\mathbb{Q})$ for some number field $K$, $m(E,\psi)$ is the multiplicity of $\psi$ in the decomposition of $E(K)\otimes_{\mathbb{Z}}\mathbb{C}$ into irreducible representations of $\text{Gal}(K/\mathbb{Q})$ and $\delta$ vanishes except for when the $p$-adic multiplier of the pair $(\alpha,\psi)$ is zero, in which case $\delta=1$.\\
In particular, when $\psi$ is trivial, $m(E,\psi)=\text{rank} E(\mathbb{Q})$.
\end{rmk}
We now state and prove the result analogous to Wuthrich \cite[Theorem 1]{subleading}.

\begin{thm}
\label{mains}
Let $E$ be an elliptic curve over $\mathbb{Q}$ and $p$ an odd prime of semistable reduction for $E$. Let $\psi$ be either the trivial character or a quadratic character of conductor $p^k M$. Let $\alpha$ be an allowable $p$-root and denote by $m$ the order of vanishing at $s=1$ of $L_p(E,\alpha,\psi,s)$. Then
\begin{equation*}
L_p(E,\alpha,\psi,s)=a_m(s-1)^m+a_{m+1}(s-1)^{m+1}+\cdots
\end{equation*}
with
\begin{equation*}
a_{m+1}=-\frac{1}{2}\log\braket{Q} a_m.
\end{equation*}
\end{thm}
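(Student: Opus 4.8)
The plan is to feed the functional equation into the Taylor expansion of $L_p(E,\alpha,\psi,s)$ at the central point $s=1$, using the symmetric form already isolated in the proof of Lemma~\ref{ordchar}. Since $\psi$ is real-valued, equation~(\ref{lambdapfunctional}) together with Lemma~\ref{ordchar} gives
\[
\Lambda_p(E,\alpha,\psi,2-s)=(-1)^m\,\Lambda_p(E,\alpha,\psi,s),
\]
where $\Lambda_p(E,\alpha,\psi,s)=\braket{Q}^{s/2}L_p(E,\alpha,\psi,s)$ and $\braket{Q}^{s/2}:=\exp(\tfrac{s}{2}\log\braket{Q})$ is a locally analytic, non-vanishing function of $s\in\mathbb{Z}_p$ since $\braket{Q}\in 1+p\mathbb{Z}_p$.

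First I would move the functional equation to the origin: setting $g(u)=\Lambda_p(E,\alpha,\psi,1+u)$ and using $2-(1+u)=1-u$, the displayed identity becomes $g(-u)=(-1)^m g(u)$. Writing $g(u)=\sum_{n\ge 0}b_nu^n$ and comparing coefficients of $u^n$ forces $b_n=0$ for every $n\not\equiv m\pmod 2$; in particular $b_{m+1}=0$.

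Next I would compute $b_m$ and $b_{m+1}$ directly from the factorisation of $\Lambda_p$. Expanding
\[
\braket{Q}^{(1+u)/2}=\braket{Q}^{1/2}\exp\!\Big(\tfrac{u}{2}\log\braket{Q}\Big)=\braket{Q}^{1/2}\Big(1+\tfrac{1}{2}\log\braket{Q}\cdot u+\cdots\Big)
\]
and multiplying by $L_p(E,\alpha,\psi,1+u)=a_mu^m+a_{m+1}u^{m+1}+\cdots$, one reads off $b_m=\braket{Q}^{1/2}a_m$ and $b_{m+1}=\braket{Q}^{1/2}\big(a_{m+1}+\tfrac12\log\braket{Q}\cdot a_m\big)$. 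Since $b_{m+1}=0$ and $\braket{Q}^{1/2}$ is a unit, dividing through yields $a_{m+1}=-\tfrac12\log\braket{Q}\cdot a_m$, which is the assertion.

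The argument is short, and there is no real obstacle: the one genuine input — that the order of vanishing $m$ determines the sign appearing in the functional equation — has already been extracted in Lemma~\ref{ordchar}, and what remains is the bookkeeping of the substitution $s\mapsto 2-s$ together with the trivial expansion of the exponential factor $\braket{Q}^{s/2}$. The only points requiring a word of care are that $L_p(E,\alpha,\psi,s)$ is locally analytic near $s=1$, so that the Taylor expansion in the statement is legitimate, and that $\braket{Q}^{1/2}$ is invertible; both hold because $p$ is odd and $\braket{Q}\in 1+p\mathbb{Z}_p$.
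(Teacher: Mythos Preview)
Your proof is correct and follows essentially the same route as the paper: both invoke Lemma~\ref{ordchar} to write the functional equation as $\Lambda_p(2-s)=(-1)^m\Lambda_p(s)$, deduce that the $(m+1)$-st Taylor coefficient of $\Lambda_p$ at $s=1$ vanishes, and then unpack that coefficient in terms of $a_m$ and $a_{m+1}$. The only cosmetic difference is that the paper phrases the last step via the Leibniz rule for the $(m+1)$-st derivative of $f(s)L_p(s)$, whereas you multiply the two Taylor expansions directly; these are the same computation.
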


\begin{proof}
By Lemma \ref{ordchar}, we have
\begin{equation*}
\Lambda_p(E,\alpha,\psi,2-s)=(-1)^m\Lambda_p(E,\alpha,\psi,s),
\end{equation*}
where, as in the proof of the lemma, $\Lambda_p(E,\alpha,\psi,s)=f(s)\cdot L_p(E,\alpha,\psi,s)$ with $f(s)=\braket{Q}^{s/2}$.
We can now follow Wuthrich's proof nearly identically.\\
In particular, for $i\equiv m+1 \text{ mod } 2$, one has
\begin{equation}
\label{vanishderivative}
\frac{d^i}{ds^i}\Lambda_p(E,\alpha,\psi,s)\big\vert_{s=1}=0.
\end{equation}
Choosing $i=m+1$, this gives
\begin{align*}
0&=\frac{d^{m+1}}{ds^{m+1}}\left(f(s)L_p(E,\alpha,\psi,s)\right)\big\vert_{s=1}\\
&=\left(\sum_{k=0}^{m+1}\binom{m+1}{k}\frac{d^{m+1-k}}{ds^{m+1-k}}f(s) \cdot \frac{d^{k}}{ds^{k}}L_p(E,\alpha,\psi,s)\right)\bigg\vert_{s=1}\\
&=\left((m+1)f'(s)\cdot \frac{d^{m}}{ds^{m}} L_p(E,\alpha,\psi,s)+f(s)\cdot \frac{d^{m+1}}{ds^{m+1}}L_p(E,\alpha,\psi,s)\right)\bigg\vert _{s=1}\\
&=(m+1)!(f'(1) \cdot a_m+f(1) \cdot a_{m+1}).
\end{align*}
Therefore, $a_{m+1}=-\frac{f'(1)}{f(1)}a_m=-\frac{1}{2}\log\braket{Q} a_m$.
\end{proof}
Let $E$ be as in Theorem \ref{mains}. The twist of $E$ by a quadratic character $\psi$ of conductor coprime to $p$ can be realised as an elliptic curve $E'/\mathbb{Q}$ which has semistable reduction at $p$. Therefore, if the conductor of $\psi$ is coprime to $p$, the statement of Theorem \ref{mains} for $L_p(E,\alpha,\psi,s)$ is implicitly included in the statement of Theorem \ref{mains} for the $p$-adic $L$-function of $E'$ with trivial twist.\\
On the other hand, the twist of $E$ by a quadratic character of conductor divisible by $p$ is an elliptic curve $E'$ with additive reduction at $p$. In other words, the quadratic character case in Theorem \ref{mains} is not completely exhausted by the result for the trivial character.

Note that Wuthrich's method can be generalised also to obtain the coefficient $a_{m+k}$ for every positive odd $k$ in terms of the coefficients $a_{m+j}$, where $0\leq j< m$, just by considering (\ref{vanishderivative}) for $i=m+k$. Recursively, this gives a way of expressing $a_{m+k}$ for \emph{odd} $k$ in terms of $a_{m+\ell}$ for \emph{even} $\ell$ with $\ell<k$.
In particular, we may consider Theorem \ref{mains} as a special case of the following result.

\begin{thm}
\label{mainsgeneral}
Let $E$ be an elliptic curve over $\mathbb{Q}$ and $p$ an odd prime of semistable reduction for $E$. Let $\psi$ be either the trivial character or a quadratic character of conductor $p^k M$. Denote by $m$ the order of vanishing at $s=1$ of $L_p(E,\alpha,\psi,s)$. Then
\begin{equation*}
L_p(E,\alpha,\psi,s)=a_m(s-1)^m+a_{m+1}(s-1)^{m+1}+\cdots,
\end{equation*}
where, for odd $k>0$, we have
\begin{equation*}
a_{m+k}=-\sum_{i=0}^{k-1}\frac{1}{(k-i)!}\cdot \left(\frac{\log \braket{Q}}{2}\right)^{k-i}a_{m+i}.
\end{equation*}
Therefore, for odd $k$, $a_{m+k}$ can be expressed as a linear combination of $a_{m+\ell}$ for even $\ell$, $0\leq\ell< k$.
\end{thm}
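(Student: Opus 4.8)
The plan is to run the mechanism in the proof of Theorem \ref{mains}, but applied to the $(m+k)$-th derivative instead of the $(m+1)$-th. As in that proof, Lemma \ref{ordchar} converts the functional equation into $\Lambda_p(E,\alpha,\psi,2-s)=(-1)^m\Lambda_p(E,\alpha,\psi,s)$ with $\Lambda_p(E,\alpha,\psi,s)=f(s)\,L_p(E,\alpha,\psi,s)$ and $f(s)=\braket{Q}^{s/2}$, and hence gives the vanishing \eqref{vanishderivative}, namely $\frac{d^i}{ds^i}\Lambda_p(E,\alpha,\psi,s)\vert_{s=1}=0$ whenever $i\equiv m+1\pmod 2$. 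Since $m+k\equiv m+1\pmod 2$ precisely when $k$ is odd, for odd $k>0$ I would take $i=m+k$ in \eqref{vanishderivative}.

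Next I would expand $\frac{d^{m+k}}{ds^{m+k}}\bigl(f(s)L_p(E,\alpha,\psi,s)\bigr)\big\vert_{s=1}$ by the Leibniz rule. Because $L_p(E,\alpha,\psi,s)$ vanishes to order $m$ at $s=1$, only the summands in which $L_p$ is differentiated at least $m$ times survive; writing the order of differentiation of $L_p$ as $m+i$ with $0\le i\le k$, and using $\frac{d^{m+i}}{ds^{m+i}}L_p(E,\alpha,\psi,s)\vert_{s=1}=(m+i)!\,a_{m+i}$ together with $f^{(\ell)}(1)=\bigl(\tfrac12\log\braket{Q}\bigr)^{\ell}f(1)$, the Leibniz sum becomes $f(1)\sum_{i=0}^{k}\binom{m+k}{m+i}(m+i)!\,\bigl(\tfrac12\log\braket{Q}\bigr)^{k-i}a_{m+i}$. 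The elementary identity $\binom{m+k}{m+i}(m+i)!=\frac{(m+k)!}{(k-i)!}$ rewrites this as $(m+k)!\,f(1)\sum_{i=0}^{k}\frac{1}{(k-i)!}\bigl(\tfrac{\log\braket{Q}}{2}\bigr)^{k-i}a_{m+i}$. Dividing by the nonzero constant $(m+k)!\,f(1)$ and isolating the $i=k$ term, for which $(k-i)!=1$ and the power is $0$, yields exactly $a_{m+k}=-\sum_{i=0}^{k-1}\frac{1}{(k-i)!}\bigl(\tfrac{\log\braket{Q}}{2}\bigr)^{k-i}a_{m+i}$.

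For the final assertion I would argue by strong induction on the odd integer $k$. The base case $k=1$ is the formula of Theorem \ref{mains}, $a_{m+1}=-\tfrac12\log\braket{Q}\,a_m$, which is a multiple of the even-index coefficient $a_m$. For odd $k>1$, the right-hand side of the displayed formula is a linear combination of the $a_{m+i}$ with $0\le i<k$; the terms with $i$ even are already of the desired shape, while for each odd $i<k$ the inductive hypothesis expresses $a_{m+i}$ as a linear combination of $a_{m+\ell}$ with $\ell$ even and $\ell<i<k$. Collecting terms exhibits $a_{m+k}$ as a linear combination of $a_{m+\ell}$ with $\ell$ even, $0\le\ell<k$.

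I do not expect a genuine obstacle: the argument is that of Theorem \ref{mains} pushed to higher derivatives, and the only points needing (minor) care are checking the parity condition $m+k\equiv m+1\pmod 2$, the bookkeeping in the Leibniz expansion — in particular the collapse of $\binom{m+k}{m+i}(m+i)!$ to $(m+k)!/(k-i)!$ — and arranging the induction in the last step so that the recursive elimination of odd-index coefficients terminates.
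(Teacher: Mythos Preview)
Your proposal is correct and follows essentially the same approach as the paper: apply \eqref{vanishderivative} with $i=m+k$, expand by Leibniz, drop the terms where $L_p$ is differentiated fewer than $m$ times, and use $f^{(j)}(1)=\bigl(\tfrac12\log\braket{Q}\bigr)^{j}f(1)$. Your write-up is in fact a bit more explicit than the paper's, spelling out the binomial simplification $\binom{m+k}{m+i}(m+i)!=(m+k)!/(k-i)!$ and giving the strong induction for the final assertion, which the paper leaves to the reader.
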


\begin{proof}
Using (\ref{vanishderivative}) with $i=m+k$ and the fact that $\frac{d^{k}}{ds^k}L_p(E,\alpha,\psi,s)\big\vert_{s=1}=0$ for all $k<m$ gives
\begin{equation*}
\sum_{r=m}^{m+k}\frac{(m+k)!}{(m+k-r)!} f^{(m+k-r)}(1)a_{r}=0.
\end{equation*}
It remains to notice that, for all $j\geq 0$, $f^{(j)}(s)=\left(\frac{\log\braket{Q}}{2}\right)^{j}\braket{Q}^{s/2}$.
\end{proof}

\begin{rmk}
There are essentially two reasons why we did not simply state Theorem \ref{mains} as a corollary of Theorem \ref{mainsgeneral}. The first one is that we wanted to mimic the proof in \cite{subleading} as much as possible. The second one is that, although Theorem \ref{mainsgeneral} is more general than Theorem \ref{mains}, the latter is perhaps the more interesting result. Indeed, the $p$-adic version of the Birch and Swinnerton-Dyer conjecture (see Conjecture (BSD($p$)) I. in \cite[II \S 10]{mtt}) gives an explicit expression for $a_m$, involving, among other factors, the order of the Tate-Shafarevich group.\\
Thus, a similar observation to Wuthrich's in \cite{subleading} also holds here, i.e. what Theorem \ref{mains} really states is that, if BSD($p$) holds, then not only the leading but also the sub-leading coefficient of $L_p(E,\alpha,s)$ is known.\\
Under some extra assumptions, a similar result follows for the quadratic character case (see Conjecture (BSD($p$,$\psi$)) I. in \cite[II \S 11]{mtt}). 
\end{rmk}

\section{Results for $\mathcal{L}_p(E,\alpha,\psi,T)$}
\label{relationT}
The starting point for this section is Corollary \ref{functionalsupergeneralT}.  The first result is then
\begin{thm}
\label{main}
Let $E$ be an elliptic curve over $\mathbb{Q}$ and $p$ an odd prime of semistable reduction for $E$. Let $\psi$ be either the trivial character or a quadratic character of conductor $p^k M$. Let $\alpha$ be an allowable $p$-root and denote by $m$ the order of vanishing at $T=0$ of ${\mathcal{L}}_p(E,\alpha,\psi,T)$. Then
\begin{equation*}
\mathcal{L}_p(E,\alpha,\psi,T) = c_m T^m+ c_{m+1} T^{m+1}+\cdots
\end{equation*}
where 
\begin{equation*}
c_{m+1} = -\frac{c_m}{2}\left(\frac{\log\braket{Q}}{\log(\kappa(\gamma))}+m\right).
\end{equation*}
\end{thm}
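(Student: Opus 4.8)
The plan is to transfer Theorem~\ref{mains} through the change of variable $T=\kappa(\gamma)^{s-1}-1$, rather than to re-run Wuthrich's argument on the $T$-side. First I would set $u=\log\kappa(\gamma)$ and write $\mathcal{L}_p(E,\alpha,\psi,T)=L_p(E,\alpha,\psi,s)$ under the substitution $1+T=e^{u(s-1)}$, i.e. $s-1=\tfrac{1}{u}\log(1+T)$. Since $\psi$ is real-valued, Corollary~\ref{functionalsupergeneralT} specialises (exactly as in the proof of Lemma~\ref{ordchar}, replacing $-c_Q\psi(-Q)$ by $(-1)^m$) to
\begin{equation*}
\mathcal{L}_p(E,\alpha,\psi,T)=(-1)^m\braket{Q}^{\frac{\log(1+T)^{-1}}{u}}\mathcal{L}_p(E,\alpha,\psi,(1+T)^{-1}-1),
\end{equation*}
so that, putting $g(T)=\braket{Q}^{\frac{\log(1+T)}{2u}}$ and $\Lambda_p(E,\alpha,\psi,T)=g(T)\,\mathcal{L}_p(E,\alpha,\psi,T)$, one gets the palindromic relation $\Lambda_p(E,\alpha,\psi,(1+T)^{-1}-1)=(-1)^m\Lambda_p(E,\alpha,\psi,T)$. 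Here I have used $g((1+T)^{-1}-1)=g(T)^{-1}$, which is immediate from $\log((1+T)^{-1})=-\log(1+T)$.

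Next I would extract the coefficient relation from this symmetry. Write $\sigma(T)=(1+T)^{-1}-1=-T+T^2-\cdots$, the involution fixing $T=0$ with $\sigma'(0)=-1$. Expanding $\Lambda_p(E,\alpha,\psi,T)=\sum_{j\ge m} b_j T^j$ with $b_m=c_m g(0)=c_m$ (since $g(0)=1$) and comparing the coefficients of $T^{m}$ and $T^{m+1}$ on the two sides of $\Lambda_p(\sigma(T))=(-1)^m\Lambda_p(T)$: the $T^m$ coefficient of the left side is $b_m(\sigma'(0))^m=(-1)^m b_m$, consistent; the $T^{m+1}$ coefficient gives
\begin{equation*}
(-1)^{m+1}b_{m+1}+\binom{m}{1}(-1)^{m-1}\cdot[\text{the }T^2\text{ coeff. of }\sigma]\cdot b_m=(-1)^m b_{m+1},
\end{equation*}
and since the $T^2$ coefficient of $\sigma(T)$ is $1$, this forces $2(-1)^{m+1}b_{m+1}=-m(-1)^{m}b_m$, i.e. $b_{m+1}=\tfrac{m}{2}b_m$. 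Alternatively, and more cleanly, I would note that $1+\sigma(T)=(1+T)^{-1}$ gives $(1+T)\bigl(\Lambda_p(\sigma(T))\bigr)$... but the direct coefficient comparison above is the most transparent route and the one I would actually write out.

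Finally I would unwind $b_j$ in terms of $c_j$. From $\Lambda_p=g(T)\mathcal{L}_p$ with $g(T)=\exp\!\bigl(\tfrac{\log\braket{Q}}{2u}\log(1+T)\bigr)=1+\tfrac{\log\braket{Q}}{2u}T+\cdots$, one has $b_m=c_m$ and $b_{m+1}=c_{m+1}+\tfrac{\log\braket{Q}}{2u}c_m$. Substituting into $b_{m+1}=\tfrac{m}{2}b_m$ yields $c_{m+1}=\tfrac{m}{2}c_m-\tfrac{\log\braket{Q}}{2u}c_m=-\tfrac{c_m}{2}\bigl(\tfrac{\log\braket{Q}}{\log\kappa(\gamma)}+m\bigr)$, as claimed. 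The main obstacle is purely bookkeeping: one must be careful that the palindromic functional equation on the $T$-side carries the extra automorphy factor $\braket{Q}^{\log(1+T)^{-1}/u}$ (not a constant), and that the involution $\sigma$ contributes the term $\tfrac{m}{2}c_m$ coming from the chain rule $\sigma'(0)=-1$ and the second-order term of $\sigma$ — this is exactly the source of the ``$+m$'' that is absent in Theorem~\ref{mains}. A sanity check: when $\braket{Q}=1$ (e.g. $Q=1$), the relation reduces to $c_{m+1}=-\tfrac{m}{2}c_m$, consistent with the pure palindromy of $\mathcal{L}_p$ itself in that case; and the $s$-side statement $a_{m+1}=-\tfrac12\log\braket{Q}\,a_m$ is recovered in the limit of the leading term of the change of variables.
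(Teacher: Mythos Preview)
Your approach is essentially the paper's: both work on the $T$-side with the functional equation of Corollary~\ref{functionalsupergeneralT}, establish $-c_Q\psi(-Q)=(-1)^m$, and then compare order-$(m{+}1)$ information at $T=0$. The only difference is packaging: you first pass to a ``completed'' function $\Lambda_p=g\cdot\mathcal{L}_p$ with $g(T)=\braket{Q}^{\log(1+T)/2u}$ so that the functional equation becomes the pure palindromy $\Lambda_p(\sigma(T))=(-1)^m\Lambda_p(T)$, and then compare Taylor coefficients; the paper instead keeps the automorphy factor $(1+T)^{\log\braket{Q}/\log\kappa(\gamma)}$ explicit, differentiates both sides $m+1$ times, and evaluates at $T=0$. (Your opening sentence is slightly misleading: you never actually invoke Theorem~\ref{mains} or the change of variable $T=\kappa(\gamma)^{s-1}-1$; you re-run the $s$-side argument on the $T$-side.)

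There are, however, two sign slips that happen to cancel. From your displayed equation
\[
(-1)^{m+1}b_{m+1}+m(-1)^{m-1}b_m=(-1)^m b_{m+1}
\]
one obtains $2(-1)^{m+1}b_{m+1}=-m(-1)^{m-1}b_m=-m(-1)^{m+1}b_m$, hence $b_{m+1}=-\tfrac{m}{2}\,b_m$, not $+\tfrac{m}{2}\,b_m$. With the correct sign, $c_{m+1}+\tfrac{\log\braket{Q}}{2u}c_m=-\tfrac{m}{2}c_m$ gives the claimed formula directly, whereas your final equality $\tfrac{m}{2}c_m-\tfrac{\log\braket{Q}}{2u}c_m=-\tfrac{c_m}{2}\bigl(\tfrac{\log\braket{Q}}{u}+m\bigr)$ is itself an arithmetic error. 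Your own sanity check exposes the inconsistency: when $\braket{Q}=1$ one has $g\equiv 1$, so $b_j=c_j$, and your intermediate claim $b_{m+1}=\tfrac{m}{2}b_m$ contradicts the target $c_{m+1}=-\tfrac{m}{2}c_m$.
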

\begin{proof}
By Corollary \ref{functionalsupergeneralT}, we have
\begin{equation}
\label{functionalgenT}
\mathcal{L}_p(E,\alpha,\psi,(1+T)^{-1}-1)=(-c_Q)\psi(-Q)(1+T)^{\frac{\log\braket{Q}}{\log \kappa(\gamma)}}\mathcal{L}_p(E,\alpha,\psi,T).
\end{equation}
Differentiating $m$ times and evaluating at $T=0$ yields the equality $-c_Q\psi(-Q)=(-1)^m$, similarly to Lemma \ref{ordchar}.\\
Then, differentiating $m+1$ times the right hand side resp. the left hand side of (\ref{functionalgenT}) and evaluating at $T=0$ gives
\begin{equation*}
(-1)^m(m+1)!\left(\frac{\log\braket{Q}}{\log \kappa(\gamma)}c_m+c_{m+1}\right)
\end{equation*}
resp.
\begin{equation*}
(-1)^{m+1}(m+1)!(c_{m+1}+mc_m).
\end{equation*}
\end{proof}
Assume now that $E$ has ordinary semistable reduction at the odd prime $p$, that is, either good ordinary or multiplicative reduction. Since, in this case, there is a unique $p$-admissible root $\alpha$ for $E$, we may drop $\alpha$ from the notation and simply write $\mathcal{L}_p(E,\psi,T)$ for the $p$-adic $L$-function of $E$, twisted by $\psi$. By \cite{wuthrichintegral}, $\mathcal{L}_p(E,\psi,T)\in\Lambda= \mathbb{Z}_p[\psi][[T]]$ and we define the \emph{$\mu$-invariant} of $\mathcal{L}_p(E,\alpha,\psi,T)$ as the $p$-adic valuation of the highest power of $\pi$ dividing $\mathcal{L}_p(E,\alpha,\psi,T)$, where $\pi$ is a uniformiser for $\mathbb{Z}_p[\psi]$.\\
The relation between $\mathcal{L}_p(E,\alpha,\psi,T)$ and $\mathcal{L}_p(E,\alpha,\bar{\psi},T)$ allows us to derive a result concerning the relation between the $\mu$-invariant of $\mathcal{L}_p(E,\alpha,\psi,T)$ and that of $\mathcal{L}_p(E,\alpha,\bar{\psi},T)$.

\begin{thm}
\label{mupsipsibar}
Let $E$ be an elliptic curve over $\mathbb{Q}$ and $p$ an odd prime of ordinary semistable reduction for $E$. Let $\psi$ be a character of conductor $p^k M$. The $\mu$-invariant of $\mathcal{L}_p(E,\psi,T)$ equals the $\mu$-invariant of $\mathcal{L}_p(E,\bar{\psi},T)$.
\end{thm}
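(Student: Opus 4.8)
The plan is to read the statement off directly from the functional equation of Corollary \ref{functionalsupergeneralT}. In the ordinary semistable case we drop $\alpha$ from the notation, and the corollary becomes
\[
\mathcal{L}_p(E,\psi,T)=-c_Q\,\bar\psi(-Q)\,\braket{Q}^{\frac{\log(1+T)^{-1}}{\log\kappa(\gamma)}}\,\mathcal{L}_p(E,\bar\psi,(1+T)^{-1}-1).
\]
I will show that each factor on the right turns $\mathcal{L}_p(E,\bar\psi,T)$ into $\mathcal{L}_p(E,\psi,T)$ by an operation that does not change the $\mu$-invariant. First observe that $\mathbb{Q}_p(\psi)=\mathbb{Q}_p(\bar\psi)$, since the values of $\bar\psi$ are the inverses of those of $\psi$; hence $\mathbb{Z}_p[\psi]=\mathbb{Z}_p[\bar\psi]$ with a common uniformiser $\pi$, and by \cite{wuthrichintegral} both $\mathcal{L}_p(E,\psi,T)$ and $\mathcal{L}_p(E,\bar\psi,T)$ lie in $\Lambda=\mathbb{Z}_p[\psi][[T]]$, so comparing their $\mu$-invariants is meaningful.

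Next I would dispose of the scalar and analytic factors. Since $c_Q=\pm1$ and $Q$ is coprime to $pM$, hence to the conductor $p^kM$ of $\psi$, the value $\bar\psi(-Q)$ is a root of unity, so $-c_Q\bar\psi(-Q)\in\mathbb{Z}_p[\psi]^\times$. For the middle factor, $\kappa(\gamma)$ is a topological generator of $1+p\mathbb{Z}_p$ and $\braket{Q}\in 1+p\mathbb{Z}_p$, so $c:=\frac{\log\braket{Q}}{\log\kappa(\gamma)}\in\mathbb{Z}_p$ and
\[
\braket{Q}^{\frac{\log(1+T)^{-1}}{\log\kappa(\gamma)}}=\exp\!\big(c\log(1+T)^{-1}\big)=(1+T)^{-c}=\sum_{n\geq 0}\binom{-c}{n}T^n;
\]
by continuity of $\mathbb{Z}_p$-binomial coefficients this series has coefficients in $\mathbb{Z}_p$ and constant term $1$, hence lies in $\Lambda^\times$. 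Thus multiplication by $-c_Q\bar\psi(-Q)\braket{Q}^{\frac{\log(1+T)^{-1}}{\log\kappa(\gamma)}}$ is multiplication by a unit of $\Lambda$, which preserves divisibility by every power of $\pi$ and hence the $\mu$-invariant.

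It remains to handle the change of variable $T\mapsto(1+T)^{-1}-1$. Since $(1+T)^{-1}-1=-T+T^2-T^3+\cdots\in T\mathbb{Z}_p[[T]]$, the map $\sigma\colon g(T)\mapsto g\big((1+T)^{-1}-1\big)$ is a well-defined continuous ring endomorphism of $\Lambda$ fixing $\mathbb{Z}_p[\psi]$, and it is an involution because $(1+T)^{-1}-1$ is its own functional inverse; hence $\sigma$ is a ring automorphism of $\Lambda$. An automorphism fixing $\mathbb{Z}_p[\psi]$ fixes $\pi$, so $\pi^n\mid g\iff\pi^n\mid\sigma(g)$ and therefore $\mu(\sigma(g))=\mu(g)$ for all $g\in\Lambda$. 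Putting the three points together, the displayed relation reads $\mathcal{L}_p(E,\psi,T)=u\cdot\sigma\big(\mathcal{L}_p(E,\bar\psi,T)\big)$ with $u\in\Lambda^\times$, whence $\mu\big(\mathcal{L}_p(E,\psi,T)\big)=\mu\big(\mathcal{L}_p(E,\bar\psi,T)\big)$.

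I do not expect a serious obstacle here. The only point requiring care is to make the middle factor and the substitution rigorous over the integral Iwasawa algebra $\Lambda$ rather than merely over $\mathbb{Q}_p(\psi)[[T]]$ — concretely, the claim $(1+T)^{-c}\in\Lambda^\times$ for $c\in\mathbb{Z}_p$, and the observation that $\sigma$ restricts to an automorphism of $\Lambda$ itself. This is precisely where the running hypotheses enter: $p$ odd (so that $\braket{\,\cdot\,}$ and the decomposition $1+p\mathbb{Z}_p$ behave as used) and $E$ ordinary at $p$ (so that $\mathcal{L}_p(E,\psi,T),\mathcal{L}_p(E,\bar\psi,T)\in\Lambda$ by \cite{wuthrichintegral} and the $\mu$-invariant is defined).
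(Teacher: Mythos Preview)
Your proof is correct and follows the same overall strategy as the paper: invoke Corollary~\ref{functionalsupergeneralT}, observe that the scalar and power-series factors are units in $\Lambda$, and then argue that the substitution $T\mapsto(1+T)^{-1}-1$ preserves the $\mu$-invariant. The only substantive difference is in this last step. The paper computes the coefficients of $g(T)=f((1+T)^{-1}-1)$ explicitly from the identity $((1+T)^{-1}-1)^j=\sum_{k\geq j}\binom{k-1}{j-1}(-1)^kT^k$ and compares $\pi$-adic valuations term by term; you instead observe that the substitution is an involution, hence a ring automorphism of $\Lambda$ fixing $\mathbb{Z}_p[\psi]$, so it preserves divisibility by $\pi$. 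Your argument is cleaner and applies verbatim to any substitution by an element of $T\mathbb{Z}_p[[T]]$ with unit linear coefficient, while the paper's computation is more hands-on. You also make explicit two points the paper passes over: that $\mathbb{Z}_p[\psi]=\mathbb{Z}_p[\bar\psi]$, so the two $\mu$-invariants live in the same ring, and that $\braket{Q}^{\frac{\log(1+T)^{-1}}{\log\kappa(\gamma)}}=(1+T)^{-c}$ with $c\in\mathbb{Z}_p$, hence lies in $\Lambda^\times$.
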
 
\begin{proof}
By Corollary \ref{functionalsupergeneralT}, 
\begin{equation*}
\mathcal{L}_p(E,\psi,T)=-c_Q\cdot\bar{\psi}(-Q)\braket{Q}^{\frac{\log(1+T)^{-1}}{\log\kappa(\gamma)}}\mathcal{L}_p(E,\bar{\psi},(1+T)^{-1}-1).
\end{equation*}
Now $c_Q=\pm 1$, $\bar{\psi}(-Q)$ is a root of unity and $\braket{Q}^{\frac{\log(1+T)^{-1}}{\log\kappa(\gamma)}}$ is a unit power series. Therefore the $\mu$-invariant of $\mathcal{L}_p(E,\psi,T)$ is the same as the $\mu$-invariant of $\mathcal{L}_p(E,\bar{\psi},(1+T)^{-1}-1)$.\\
It remains to show that the $\mu$-invariant of a power series in $\Lambda$ is unchanged under the substitution $T\mapsto(1+T)^{-1}-1$. For this purpose, let $f(T)=\sum_{k=0}^{\infty}a_kT^k$ and $g(T)= f((1+T)^{-1}-1)$. Then $g(T)=\sum_{k=0}^{\infty} b_kT^k$, where
\begin{equation*}
b_0=a_0 \qquad \text{and}\qquad b_k=(-1)^k\sum_{i=0}^{k-1}\binom{k-1}{i}a_{i+1} \text{ for } k\geq 1.
\end{equation*}
This comes from the fact that, for $j\geq 1$, we have the equality of formal power series $((1+T)^{-1}-1)^j=\sum_{k=j}^{\infty}\binom{k-1}{j-1}(-1)^kT^k$.\\
Let now $k$ be the minimum integer such that the $\mu$-invariant of $f(T)$ equals the $p$-adic valuation of $a_k$, denoted $\nu_p(a_k)$. Then
\begin{equation*}
\nu_p(b_0)=\nu_p(a_0)\geq\nu_p(a_k), \qquad \nu_p(b_i)\geq \min_{1\leq j\leq i}\nu_p (a_j)\geq \nu_p(a_k) \quad \forall\ i\geq 1
\end{equation*}
and
\begin{equation*}
\nu_p(b_{k})=\nu_p(a_k).
\end{equation*}
This completes the proof.
\end{proof}

\section{Generalisations to the base-change of an elliptic curve}
Let $E$ be an elliptic curve over $\mathbb{Q}$ of conductor $N$. Let $K$ be an abelian number field such that the additive primes for $E/\mathbb{Q}$ remain additive in the base-change of $E$ to $K$. Denote by $p$ an odd prime of semistable reduction for $E$ and assume further that the field $K$ is disjoint from the cyclotomic $\mathbb{Z}_p$-extension of $\mathbb{Q}$. Fix a choice of an allowable $p$-root $\alpha$ for $E/\mathbb{Q}$ as in Section \ref{notation}. We can then define the $p$-adic $L$-function for the base-changed curve $E/K$ as (cf. \cite{mazurswinn} and \cite{matsuno})
\begin{equation}
\label{productformula}
\mathcal{L}_p(E/K,\alpha,T)=\prod_{\psi\in \widehat{\text{Gal}(K/\mathbb{Q})}}\mathcal{L}_p(E,\alpha,\psi,T),
\end{equation}
where $\widehat{\text{Gal}(K/\mathbb{Q})}$ is the group of characters on $\text{Gal}(K/\mathbb{Q})$ and $\mathcal{L}_p(E,\alpha,\psi,T)$ is the $p$-adic $L$-function of $E/\mathbb{Q}$ twisted by $\psi$, as defined in Section \ref{notation}. From the interpolation property for each $\mathcal{L}_p(E,\alpha,\psi,T)$ and the assumption on the behaviour of the additive places in $K/\mathbb{Q}$, it is easy to see that $\mathcal{L}_p(E/K,\alpha,T)$ interpolates the values at $s=1$ of the complex $L$-function $L(E/K,s)$, twisted by characters of the $\mathbb{Z}_p$-cyclotomic extension of $K$. 

Furthermore, using Corollary \ref{functionalsupergeneralT} on each factor of the right hand side of (\ref{productformula}), we find the following transformation property for $\mathcal{L}_p(E/K,\alpha,T)$
\begin{equation}
\label{functionalgeneralT}
\mathcal{L}_p(E/K,\alpha,T)=\bigg[\prod_{\psi}(-c_{Q_{\psi}})\cdot \prod_{\psi}\bar{\psi}(-Q_{\psi})\bigg]\cdot \bigg[\prod_{\psi}\braket{Q_{\psi}}^{\frac{\log(1+T)^{-1}}{\log\kappa(\gamma)}}\bigg]\cdot \mathcal{L}_p(E/K,\alpha,(1+T)^{-1}-1),
\end{equation}
where the products run over all characters $\psi$ of $\widehat{\text{Gal}(K/\mathbb{Q})}$ and $Q_{\psi}$ is the largest divisor of $N$ coprime with $p$ and with the conductor of $\psi$.

Recall from Section \ref{notation} that the variables $T$ and $s$ are related by the identity $T=\kappa(\gamma)^{s-1}-1$. The analogue of Proposition \ref{functionalsupergenerals} for $E/K$ is then
\begin{equation}
\label{functionalgeneral}
L_p(E/K,\alpha, 2-s)=\bigg[\prod_{\psi}(-c_{Q_{\psi}})\cdot \prod_{\psi}\bar{\psi}(-Q_{\psi})\bigg]\cdot \bigg[\prod_{\psi}\braket{Q_{\psi}}^{s-1}\bigg]L_p(E/K,\alpha,s).
\end{equation}
Equations (\ref{functionalgeneral}) and (\ref{functionalgeneralT}) allow us to deduce analogues of Theorem \ref{mains} and Theorem \ref{main} for $E/K$. The proofs are essentially identical, so we include only the statements of the results.
\begin{thm}
\label{generalisations}
Let $E$ be an elliptic curve defined over $\mathbb{Q}$ of conductor $N$ and $p$ an odd prime of semistable reduction for $E$. Denote by $E/K$ the base-change of $E$ to an abelian number field $K$, where $K$ is assumed to be disjoint from the $\mathbb{Z}_p$-extension of $\mathbb{Q}$ and such that additive reduction is preserved everywhere in the extension $K/\mathbb{Q}$. For each allowable $p$-root $\alpha$, let $m$ be the order of vanishing of $L_p(E/K,\alpha,s)$ at $s=1$. Then
\begin{equation*}
L_p(E/K,\alpha,s)=a_m(s-1)^m+a_{m+1}(s-1)^{m+1}+\cdots
\end{equation*}
with
\begin{equation*}
a_{m+1}=-\frac{\sum_{\psi}\log\braket{Q_{\psi}}}{2}a_m,
\end{equation*}
where $\psi$ runs through the characters of $\text{Gal}(K/\mathbb{Q})$ and $Q_{\psi}$ is the largest divisor of $N$ which is coprime with $p$ and with the conductor of $\psi$.
\end{thm}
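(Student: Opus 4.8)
The plan is to carry over the proof of Theorem~\ref{mains} essentially unchanged, using the transformation law (\ref{functionalgeneral}) for $E/K$ in place of Proposition~\ref{functionalsupergenerals}. Write $\epsilon=\prod_{\psi}(-c_{Q_{\psi}})\cdot\prod_{\psi}\bar\psi(-Q_{\psi})$ for the constant appearing in (\ref{functionalgeneral}) and set $g(s)=\prod_{\psi}\braket{Q_{\psi}}^{s/2}$, the products running over $\psi\in\widehat{\text{Gal}(K/\mathbb{Q})}$; as with the factor $\braket{Q}^{s/2}$ of Section~\ref{relations}, the function $g$ is analytic on $\mathbb{Z}_p$ and $g(1)$ is a unit. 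Putting $\Lambda_p(E/K,\alpha,s)=g(s)\cdot L_p(E/K,\alpha,s)$ and using $g(s)/g(2-s)=\prod_{\psi}\braket{Q_{\psi}}^{s-1}$, equation (\ref{functionalgeneral}) becomes the symmetric identity $\Lambda_p(E/K,\alpha,2-s)=\epsilon\cdot\Lambda_p(E/K,\alpha,s)$.

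The first step is to identify the sign: $\epsilon=(-1)^m$. As in the proof of Theorem~\ref{main}, I would differentiate the symmetric functional equation $m$ times and evaluate at $s=1$; since $g(1)$ is a unit, $\Lambda_p(E/K,\alpha,s)$ vanishes to order exactly $m$ at $s=1$, so the chain rule gives $(-1)^m\,\frac{d^m}{ds^m}\Lambda_p(E/K,\alpha,s)\big\vert_{s=1}=\epsilon\cdot\frac{d^m}{ds^m}\Lambda_p(E/K,\alpha,s)\big\vert_{s=1}$ with the common factor nonzero, forcing $\epsilon=(-1)^m$. (Alternatively: since $\psi$ and $\bar\psi$ have the same conductor, $Q_\psi=Q_{\bar\psi}$ and $c_{Q_\psi}=c_{Q_{\bar\psi}}$, so each non-real conjugate pair $\{\psi,\bar\psi\}$ contributes $(-c_{Q_\psi})^2\psi(-Q_\psi)\bar\psi(-Q_\psi)=1$ to $\epsilon$, whence $\epsilon=\prod_{\psi\text{ real}}(-c_{Q_\psi})\psi(-Q_\psi)$, and Lemma~\ref{ordchar} applied to each real $\psi$ identifies this with $(-1)^m$.)

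With $\Lambda_p(E/K,\alpha,2-s)=(-1)^m\,\Lambda_p(E/K,\alpha,s)$ in hand, one proceeds exactly as in Theorem~\ref{mains}: for $i\equiv m+1\pmod 2$ the $i$-th derivative of $\Lambda_p(E/K,\alpha,s)$ at $s=1$ vanishes, and taking $i=m+1$ and expanding $\frac{d^{m+1}}{ds^{m+1}}\big(g(s)L_p(E/K,\alpha,s)\big)\big\vert_{s=1}$ by Leibniz, every term involving $\frac{d^k}{ds^k}L_p(E/K,\alpha,s)\big\vert_{s=1}$ with $k<m$ vanishes; what remains is $(m+1)!\big(g'(1)a_m+g(1)a_{m+1}\big)=0$, i.e.\ $a_{m+1}=-\frac{g'(1)}{g(1)}a_m$. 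Since $g(s)=\exp\!\big(\tfrac{s}{2}\sum_{\psi}\log\braket{Q_{\psi}}\big)$, we get $g'(1)/g(1)=\tfrac12\sum_{\psi}\log\braket{Q_{\psi}}$, which is the claimed formula.

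The only point that is genuinely new compared with Theorem~\ref{mains} is the determination $\epsilon=(-1)^m$, and that is the step I would handle most carefully; everything else is a bookkeeping exercise in which the single factor $\braket{Q}$ of Section~\ref{relations} is replaced throughout by the product $\prod_{\psi}\braket{Q_{\psi}}$. One should also note that the statement implicitly assumes $L_p(E/K,\alpha,s)$ is not identically zero (equivalently, that no factor $\mathcal{L}_p(E,\alpha,\psi,T)$ vanishes identically), so that $m$ is finite.
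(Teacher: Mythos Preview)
Your proposal is correct and follows exactly the approach the paper indicates: the paper states that the proof of Theorem~\ref{generalisations} is ``essentially identical'' to that of Theorem~\ref{mains}, and your write-up carries this out by replacing $\braket{Q}$ with $\prod_{\psi}\braket{Q_{\psi}}$ and handling the sign $\epsilon=(-1)^m$ via the same differentiation trick (your alternative pairing argument for $\epsilon$ is a nice bonus but not needed). The caveat about $L_p(E/K,\alpha,s)$ not being identically zero is a reasonable observation that the paper leaves implicit.
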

\begin{thm}
With the assumptions of Theorem \ref{generalisations}, we have
\begin{equation*}
\mathcal{L}_p(E/K,\alpha,T)=  c_m T^m+c_{m+1}T^{m+1}+\cdots
\end{equation*} 
with
\begin{equation*}
c_{m+1}=-\frac{c_m}{2}\left(\frac{\sum_{\psi}\log\braket{Q_{\psi}}}{\log\kappa(\gamma)}+m\right).
\end{equation*}
\end{thm}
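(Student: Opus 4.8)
The plan is to repeat the proof of Theorem \ref{main} with the single character $\psi$ replaced by the family $\{\psi\}_{\psi\in\widehat{\text{Gal}(K/\mathbb{Q})}}$: concretely, $\braket{Q}$ is replaced by $\prod_{\psi}\braket{Q_{\psi}}$ and the constant $-c_Q\,\psi(-Q)$ (which equals $\pm1$ in Theorem \ref{main}) by $\epsilon:=\big[\prod_{\psi}(-c_{Q_{\psi}})\big]\cdot\big[\prod_{\psi}\bar{\psi}(-Q_{\psi})\big]$. First I would recast the transformation property (\ref{functionalgeneralT}) in the form convenient for reading off Taylor coefficients at $T=0$. Applying the substitution $T\mapsto(1+T)^{-1}-1$ to (\ref{functionalgeneralT}), using that this substitution is its own inverse under composition (so the left-hand side becomes $\mathcal{L}_p(E/K,\alpha,(1+T)^{-1}-1)$ and the argument on the right becomes $T$) together with the identity $\braket{Q_{\psi}}^{\log(1+T)^{-1}/\log\kappa(\gamma)}=(1+T)^{-\log\braket{Q_{\psi}}/\log\kappa(\gamma)}$, one obtains
\begin{equation*}
\mathcal{L}_p(E/K,\alpha,(1+T)^{-1}-1)=\epsilon\cdot(1+T)^{B}\cdot\mathcal{L}_p(E/K,\alpha,T),\qquad B:=\frac{\sum_{\psi}\log\braket{Q_{\psi}}}{\log\kappa(\gamma)}.
\end{equation*}
I would also record in passing that $\epsilon=\pm1$: each $c_{Q_{\psi}}$ is $\pm1$, and, since $\psi$ and $\bar{\psi}$ have the same conductor (hence $Q_{\psi}=Q_{\bar{\psi}}$) and $\psi\bar{\psi}$ is trivial on $-Q_{\psi}$, the non-real characters pair up to contribute $1$ to $\prod_{\psi}\bar{\psi}(-Q_{\psi})$, while the real ones contribute $\pm1$. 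So, unlike in Theorem \ref{mupsipsibar}, no complex-conjugation subtlety intervenes.

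Next I would compare Taylor coefficients on the two sides of the displayed identity, writing $\mathcal{L}_p(E/K,\alpha,T)=c_mT^m+c_{m+1}T^{m+1}+\cdots$ with $c_m\ne 0$. Differentiating $m$ times and evaluating at $T=0$: since $(1+T)^{-1}-1=-T+O(T^2)$, the left-hand side gives $(-1)^m m!\,c_m$ and the right-hand side gives $\epsilon\,m!\,c_m$, so $\epsilon=(-1)^m$ --- the analogue of the first step in the proofs of Lemma \ref{ordchar} and Theorem \ref{main}. Differentiating $m+1$ times and evaluating at $T=0$: on the right, by the Leibniz rule only the $k=m$ and $k=m+1$ terms of $\frac{d^{m+1}}{dT^{m+1}}\big[(1+T)^{B}\mathcal{L}_p(E/K,\alpha,T)\big]$ survive, giving $(-1)^m(m+1)!\,(Bc_m+c_{m+1})$; on the left, the expansion $\big((1+T)^{-1}-1\big)^m=(-1)^mT^m\big(1-mT+O(T^2)\big)$ gives $(-1)^{m+1}(m+1)!\,(mc_m+c_{m+1})$. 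Equating the two and cancelling $(-1)^m(m+1)!$ yields $-(mc_m+c_{m+1})=Bc_m+c_{m+1}$, that is, $c_{m+1}=-\frac{c_m}{2}(B+m)$, which is the claimed formula.

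I do not expect a genuine obstacle here: once (\ref{functionalgeneralT}) is available the argument is pure formal power-series manipulation, which is precisely why the text notes that the proof is essentially the same as for Theorem \ref{main}. The only step requiring a little care --- and the source of the extra summand $m$ in the formula --- is the bookkeeping in the last paragraph, namely expanding $\big((1+T)^{-1}-1\big)^m$ one order beyond its leading term. If one wishes to avoid even that, the identity $\epsilon=(-1)^m$ can instead be obtained from Lemma \ref{ordchar} applied to the real characters, the pairing of conjugate characters, and the additivity of orders of vanishing implicit in the product formula (\ref{productformula}).
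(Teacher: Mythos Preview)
Your proposal is correct and follows essentially the same route as the paper: the paper omits the proof, noting that it is identical to that of Theorem \ref{main}, and your argument is precisely that proof with $\braket{Q}$ replaced by $\prod_{\psi}\braket{Q_{\psi}}$ and the sign $-c_Q\psi(-Q)$ by $\epsilon$, including the same two differentiations at $T=0$ and the same bookkeeping for the $m$-term coming from $((1+T)^{-1}-1)^m$. Your additional observation that $\epsilon=\pm1$ via the pairing $\psi\leftrightarrow\bar{\psi}$ is correct and makes explicit a point the paper leaves implicit (and, as you note, is in any case recovered from the order-$m$ comparison).
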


\bibliographystyle{amsplain}
\bibliography{bibliomu}

\def\cprime{$'$}
\providecommand{\bysame}{\leavevmode\hbox to3em{\hrulefill}\thinspace}
\providecommand{\MR}{\relax\ifhmode\unskip\space\fi MR }
\providecommand{\MRhref}[2]{%
  \href{http://www.ams.org/mathscinet-getitem?mr=#1}{#2}
}
\providecommand{\href}[2]{#2}
\begin{thebibliography}{1}

\bibitem{matsuno}
Kazuo Matsuno, \emph{An analogue of {K}ida's formula for the {$p$}-adic
  {$L$}-functions of modular elliptic curves}, J. Number Theory \textbf{84}
  (2000), no.~1, 80--92.

\bibitem{mazurswinn}
B.~Mazur and P.~Swinnerton-Dyer, \emph{Arithmetic of {W}eil curves}, Invent.
  Math. \textbf{25} (1974), 1--61.

\bibitem{mtt}
B.~Mazur, J.~Tate, and J.~Teitelbaum, \emph{On {$p$}-adic analogues of the
  conjectures of {B}irch and {S}winnerton-{D}yer}, Invent. Math. \textbf{84}
  (1986), no.~1, 1--48.

\bibitem{steinwuth}
William Stein and Christian Wuthrich, \emph{Algorithms for the arithmetic of
  elliptic curves using {I}wasawa theory}, Math. Comp. \textbf{82} (2013),
  no.~283, 1757--1792.

\bibitem{subleading}
Christian Wuthrich, \emph{The sub-leading coefficient of the {$L$}-function of
  an elliptic curve}, arXiv:1608.06428v1, to appear in Publ. Math. Besançon.

\bibitem{wuthrichintegral}
\bysame, \emph{On the integrality of modular symbols and {K}ato's {E}uler
  system for elliptic curves}, Doc. Math. \textbf{19} (2014), 381--402.

\end{thebibliography}
\end{document}